\documentclass{cccg23}
\usepackage{graphicx, amssymb, amsmath} 

\usepackage{algorithm}
\usepackage[noend]{algpseudocode}

\usepackage{mathtools}
\usepackage{makecell}

\newcommand{\myheader}[1]{\smallskip\noindent\textbf{#1}}

\newtheorem{property}{Property}





\title{Computing Representatives of Persistent Homology Generators with a Double Twist}
\author{Tuyen Pham\thanks{University of Florida, Gainesville; {\tt tuyen.pham@ufl.edu}}, Hubert Wagner \thanks{University of Florida, Gainesville; {\tt hwagner@ufl.edu}}}
\date{}
\index{Wagner, Hubert}

\begin{document}
\thispagestyle{empty}
\maketitle

\begin{abstract}
With the growing availability of efficient tools, persistent homology is becoming a useful methodology in a variety of applications. Significant work has been devoted to implementing tools for persistent homology diagrams; however, computing representative cycles corresponding to each point in the diagram can still be inefficient. To circumvent this problem, we extend the twist algorithm of Chen and Kerber. Our extension is based on a new technique we call \emph{saving}, which supplements their existing \emph{killing} technique. The resulting two-pass strategy can be realized using an existing matrix reduction implementation as a black-box and improves the efficiency of computing representatives of persistent homology generators. We prove the correctness of the new approach and experimentally show its performance.
\end{abstract}

\section{Overview}
Persistent homology is a popular methodology for studying geometric and topological information of data. Briefly, as we vary a parameter, persistent homology captures the creation and destruction of topological features present in the data. Typically a persistence diagram is used as a concise geometric-topological descriptor of this evolution. Its usage is becoming popular in various applied fields, including medical imaging~\cite{hu2019topology}, astronomy~\cite{heydenreich2021persistent}, genetics~\cite{rabadan2020identification} and material science~\cite{hiraoka2016hierarchical}.

In many applications it is useful to go beyond this standard descriptor, and study a geometric representation of the captured topological features. More technically, we are referring to the geometry of the representatives of persistent homology generators, which we explain in the next section along with other technicalities; see Figure~\ref{fig:VRex} for an illustration. Visualizing topological information can make topological analysis more intuitive and transparent. Indeed, our work is motivated by a recent project on analyzing neural networks using persistent homology~\cite{zheng2021topological}. During this project, we encountered some computational obstacles related to computing representative cycles -- and overcome them by creative use of available tools. This enabled visualization and further analysis of important topological features in the data. We share the developed techniques, as they can be applied more generally.

\begin{figure}
    \centering
    \includegraphics[width=.45\textwidth]{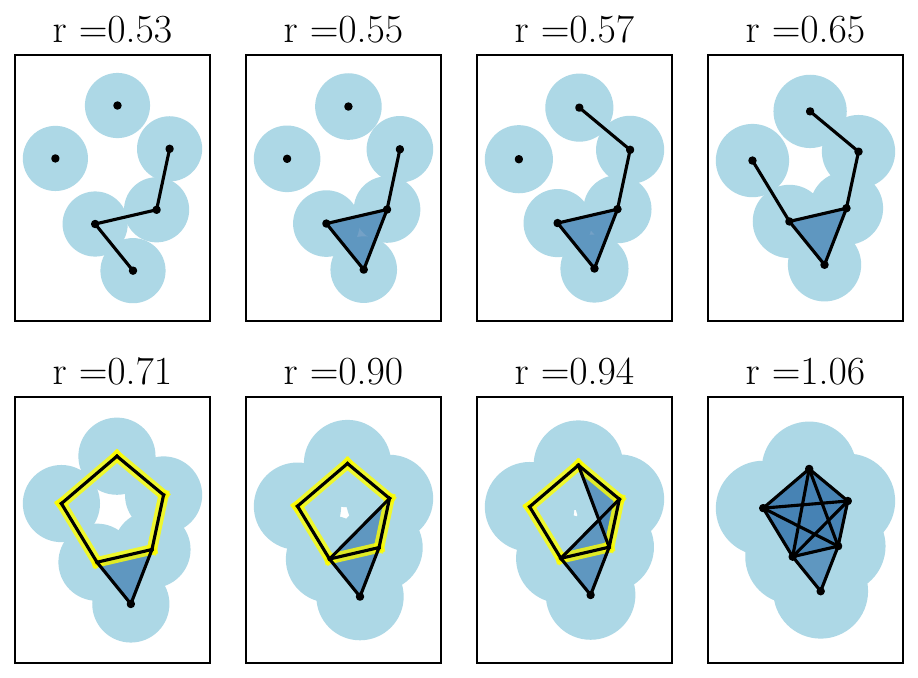}
    \caption{Example of a Vietoris--Rips filtration which approximates the topology of the growing union of disks. Typically information about the birth and death of topological features would be encoded as a persistence diagram. The cycle highlighted in yellow (born at radius 0.71, and destroyed at 1.06) is one representative cycle returned by the algorithm we propose.}
    \label{fig:VRex}
\end{figure}

These techniques are beneficial because most existing software packages focus on optimizing the computation of persistence diagrams, and not the cycle representatives. Interestingly, the standard algorithm for persistent homology produces cycle representatives (of non-essential classes) with no extra work. On the other hand, applying certain crucial optimizations complicates the situation. As a result, persistence diagrams are typically computed in time approximately linear in the number of input simplices -- but computing the representatives can scale quadratically in practical situations. In this work, we aim to close this performance gap by proposing an efficient algorithm for computing representatives of persistent homology generators.

One setting in which our results are particularly useful is low-dimensional skeleta of geometric complexes describing high-dimensional point clouds. In this scenario, it is beneficial~\cite{phat} to switch to persistent cohomology~\cite{deSilva_2011} and apply the crucial killing optimization by Chen and Kerber~\cite{Chen2011twist}. While a duality between homology and cohomology allows us to efficiently compute the persistent homology diagram, we directly obtain only representatives of persistent cohomology generators -- and not their easier to interpret and visualize homological counterparts. We offer a simple computational technique which allows us to obtain representatives of persistent homology generators with little extra overhead while benefiting from these crucial optimizations.

\myheader{Contributions.} In short, we propose a more optimistic counterpart of the \emph{killing} technique which we call the \emph{saving} technique. It allows us to generate only a subset of the columns of the boundary matrix, without affecting the results. This technique is part of a new two-pass strategy for computing representatives of non-essential persistent homology classes. In the first pass, we reduce the coboundary matrix using the usual twist algorithm, and generate a subset of the boundary matrix. In the second pass, we reduce the pruned boundary matrix which allows us to retrieve the representatives. We implement this strategy and experimentally show it is typically much faster than reducing the original boundary matrix.

\myheader{Structure of the paper.} In Section~\ref{sec:background}, we review the usual mathematical background related to persistent homology, trying to make it accessible to audiences with limited exposure to algebraic topology. In Section~\ref{sec:related} we briefly review literature on computational aspects of persistent homology. In Section~\ref{sec:algs} we explain in more details the techniques and algorithms we use in our approach. In Section~\ref{sec:doubletwist} we explain our approach and then experimentally show its efficiency in Section~\ref{sec:experiments}. We conclude the paper in Section~\ref{sec:summary}.


\section{Mathematical background}
\label{sec:background}
We offer a quick review of the algebraic machinery behind persistent homology~\cite{edelsbrunner2010computational}. We focus on its common usage in which one computes a sequence of simplicial complexes describing the geometry and topology of a finite point set in $\mathbb{R}^D$. One popular choice is the Vietoris--Rips construction. It allows us to track the birth and death of topological features as a scale parameter is varied. Our main focus is on representatives of persistent homology, which additionally allow us to find geometric representation of topological features. 

A $k$-simplex is the convex hull of $k+1$ affinely independent points in $\mathbb{R}^D$. For $k = 0,1,2,3$, these are vertices, edges, trianges and tetrahedra. A face of a simplex is the convex hull of a subset of its vertices. A face of a simplex $\sigma$ is called proper if its dimension is one less than the dimension of $\sigma$. The boundary of a simplex is the set of its proper faces. By a simplicial complex $K$ we mean collection of simplices such that for every simplex $\sigma\in K$, every face of $\sigma$ is also in $K$. 

Before we discuss homology groups, we define $k$-chains as formal sums of simplices with coefficients in $\mathbb{Z}_2$. The $k^{th}$ chain group, $C_k(K)$ is formed by $k$-chains along with elementwise addition. These chains can be viewed as subsets of simplices in $K$; the addition reduces to exclusive-difference operation. We define boundary homomorphisms $\partial_{k}:C_{k}\to C_{k-1}$ mapping a chain to the sum of the boundaries of the simplices with nonzero coefficients. Crucially, taking the boundary of any chain twice yields the 0 chain. Because of this property, we can define $k$-\textit{cycles} in $C_k(K)$ as $\ker\partial_n$ and $k$-\emph{boundaries} in $C_k(K)$ as $\operatorname{im}\partial_{k+1}$. We finally define the degree-$k$ homology group of $K$ as $H_k(K)=\ker\partial_k/\operatorname{im}\partial_{k+1}$. We say that two $k$-cycles are \emph{homologous} if they belong to the same homology class, namely when one can be formed from the other by adding any $k$-boundary.

Intuitively, homology group of degree $0,1,2$ capture the gaps between components, tunnels and voids of subsets of $\mathbb{R}^3$. Each generator of a $k$-dimensional homology group can be represented with a $k$-cycle. In practice, this information can be used to visualize the geometry of each hole present in a dataset.


\myheader{Persistent homology.}
A filtration of a simplicial complex $K$ is a sequence of nested simplicial complexes $\emptyset = K_0 \subset K_1 \subset\cdots\subset K_n=K$. For simplicity, we assume $K_i$ is formed by adding a simplex $\sigma_i$ to $K_{i-1}$. 
Upon adding $\sigma_i$ to $K_{i-1}$, there are only two possible effects: $\sigma_{i}$ either creates a new homology class or destroys an existing one. Depending on the case, we call this simplex a positive simplex or a negative one. For example, adding a 1-simplex (edge) can either connect two existing connected components, or create a new 1-dimensional cycle. Every negative simplex $\sigma_j$ can be paired with a corresponding positive simplex $\sigma_i$ with $i<j$. We call this pairing $(i,j)$ an index persistence pair. If $\sigma_i$ is a positive simplex with no corresponding negative simplex, then $\sigma_i$ creates an \textit{essential homology class}, namely a homology class of $K$. In this work we are particularly interested in a computing representatives of non-essential homology classes, namely the ones which are eventually destroyed by a negative simplex.

\myheader{Boundary matrix.} The boundary matrix of the filtration $F$ with $n$ simplices, is a $n\times n$ binary matrix $M$ where $M_{i,j}=1$ for every pair $(\sigma_i,\sigma_j)$ such that $\sigma_i$ is a proper face of $\sigma_j$, and $0$ otherwise. The order of the columns and rows of $M$ is induced by the order of the simplices in the filtration. From the boundary matrix, we get the index persistence pairs and representatives of persistent homology generators -- which in particular allow us to visualize the changing topology of the filtration. We overview related work in the next section, and provide more details in Section~\ref{sec:algs}.

Reversing the face relationship, we can talk about cofaces, cochains, cocycles, coboundaries, cohomology, and persistent cohomology. We mostly suppress these definitions, and focus on the coboundary matrix, whose columns store proper cofaces of each simplex. We will exploit certain properties of coboundary matrices proven in~\cite{deSilva_2011} to compute homological information. 


\section{Related work}
\label{sec:related}
In this section we briefly review literature on computations of (1-parameter) persistent homology. We put emphasise on work which directly inspired this paper.

The standard boundary matrix reduction algorithm for persistent homology is due to Edelsbrunner, Letscher and Zomorodian~\cite{edelsbrunner2000topological}. The work of de Silva, Morozov and Vejdemo-Johansson~\cite{deSilva_2011} proved various dualities between persistent homology and cohomology, which resulted in increased efficiency of the Dionysus library. Chen and Kerber introduced an important \emph{killing} (clearing) optimization. It played a crucial role in the implementation introduced in the PHAT library~\cite{phat}. This work also experimentally showed the importance of using the clearing optimization along with cohomological computations in the context of skeleta of simplicial complexes. This phenomenon was described in much more detail by Bauer~\cite{bauer2021ripser}, contributing to the efficiency of his popular Ripser package.

Concurrently, various important optimizations~\cite{boissonnat2014simplex, clemont_annotation, boissonnat2020edge, boissonnat2021strong} were developed in the context of the extensive GUDHI library~\cite{maria2014gudhi}.

Computing \emph{optimal} representatives of homology generators is a computationally hard problem in general~\cite{chen2011hardness}, but some special cases are more tractable~\cite{tamal_generators, li2021minimal}. In this work we do not aim at optimality.

It is worth noting that the algorithmic improvements increase the efficiency by a factor of several thousands times compared to the original algorithm (on the same hardware)~\cite{phat}. There are also practical situations in which even the optimized algorithms exhibit prohibitively slow scaling, but switching to coboundary matrices and using the twist technique results in linear scaling and overall fast computation. This is an important motivation for our work, and we elaborate on such situations in Section~\ref{sec:doubletwist}.

This efficient behaviour for datasets arising in practice should be contrasted with the worst-case computational complexity, which is $\Theta(n^3)$, where $n$ is the number of simplices in the input filtration. This bound is realized for a synthetic dataset~\cite{morozov2005persistence}. There exist subcubic algorithms~\cite{milosavljevic2011zigzag}, but they remain of theoretical interest. The result of Edelsbrunner and Parsa~\cite{edelsbrunner2014computational} shows that computing persistence diagrams is as hard as computing the rank of a matrix, leaving little hope for algorithms that would be efficient in the worst case.

We also stress that persistent cohomology and its generating cocycles are powerful tools in their own right~\cite{de2009persistent, scoccola2022toroidal}. In this paper, however, we reduce their role to  computing homology generators -- which in some situations are more natural but harder to compute.

Similar techniques to the ones presented in this work were independently devised by Virk and \v{C}ufar~\cite{ziga}. Combining with techniques from Ripser~\cite{bauer2021ripser} allowed for efficient computation of cycle representatives of Vietoris--Rips complexes in \v{C}ufar's \emph{Ripserer} software.

\section{Existing algorithms}
\label{sec:algs}
We review selected algorithms for persistent homology computations using matrix reduction techniques. Input to these algorithms is a boundary matrix, typically arising from a simplicial filtration. Output is a reduced matrix, from which persistence pairs and representative cycles (of non-essential classes) can be directly obtained.

More precisely, let $M$ be the boundary matrix of a filtration with the $j^{th}$ column of $M$ denoted $M_j$. We define the \textit{lowest-one} of the column $M_j$ as $\text{low}(M_j)=\max\{i=1,\dots,n | M_{i,j}=1\}$, namely the index of the (visually) lowest nonzero entry in the column $M_j$.  

\myheader{Standard reduction.} The standard matrix reduction algorithm by Edelsbrunner, Letscher and Zomorodian~\cite{edelsbrunner2000topological} can be summarized as a column-wise Gaussian elimination. The goal is to bring the matrix to a reduced form in which each column has a unique lowest one (or is empty). To this end we perform left-to-right column additions, namely $M_{j}\gets M_j+M_i$ for $i < j$ when $\text{low}(M_i)=\text{low}(M_j)$. Due to our choice of coefficients, this removes the entry at index $\text{low}(M_j)$ from the column, necessarily decreasing the value of $\text{low}(M_j)$. See Algorithm~\ref{alg:claspers} for pseudocode, and Figure~\ref{fig:boundaryReduc} for a simple computational example.

We mention some well-known properties of reduced boundary matrices~\cite{edelsbrunner2010computational}.

\begin{property}
\label{prop:index_pers}
Given a reduced matrix $M$, we extract the index persistence pairs as $(\text{low}(M_i), i)$ for each nonzero column $M_i$.     
\end{property}

\begin{property}
\label{prop:zero_columns}
Each zero column in a reduced boundary matrix identifies a positive simplex.
\end{property}

\begin{property}
\label{prop:nonzero_columns}
Each nonzero column of the reduced boundary matrix is a representative of a unique non-essential persistent homology class.    
\end{property}


\begin{algorithm}
\caption{Standard Boundary Matrix Reduction}
\label{alg:claspers}
\begin{algorithmic}[1]

\Require Boundary matrix $M$ of a simplicial complex filtration with $n$ columns
\Ensure Reduced boundary matrix $M$
\State $L\gets [0,\dots,0]$ of size $n$
\For{$j=1,\dots,n$}
\While{$M_j\neq 0$ \textbf{and} $L[\text{low}(M_j)]\neq0$}
    \State $M_{j}\gets M_j+M_{L[\text{low}(M_j)]}$
\EndWhile
\If{$M_j\neq0$}
    \State $L[\text{low}(M_j)] \gets j$
\EndIf
\EndFor
\end{algorithmic}
\end{algorithm}


\begin{figure}[ht!]
    \centering
    \includegraphics[width=.15\textwidth]{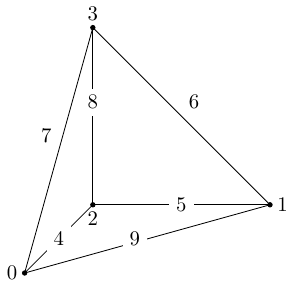}
    \includegraphics[width=.30\textwidth]{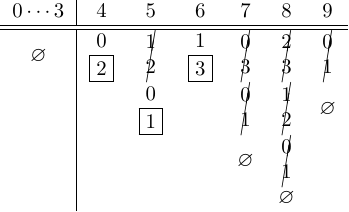}
    \caption{Standard boundary matrix reduction of a filtration of the 1-skeleton of a tetrahedron. The order in which the vertices and edges are added is determined by the numbers. Intermediate state of each column during reduction is shown.}
    \label{fig:boundaryReduc}   
\end{figure}

\myheader{Twist algorithm.}
We recall that each simplex in a filtration either creates a single homology class, or destroys one. If it creates a homology class, the corresponding column in the boundary matrix is necessarily zero (i.e. empty when viewed as a set). This fact was exploited by Chen and Kerber~\cite{Chen2011twist} to develop the \emph{killing technique}, which zeroes the columns corresponding to positive simplices. Their \emph{twist algorithm} for persistence homology capitalizes on this technique by using the reduced columns corresponding to $(p+1)$-simplices to kill columns corresponding to $p$-dimensional ones. To this end the algorithm visits columns in \emph{decreasing} order of dimension.

\myheader{Optimized implementations.}
Algorithm~\ref{alg:opt} outlines an implementation of boundary matrix reduction taking into account useful optimizations. The killing optimization~\cite{Chen2011twist} is employed in line~\ref{line:clearing}. As shown in~\cite{phat}, it is important to decouple the storage of the columns from the storage and handling of the column being currently reduced. In practice, each column is stored as an array of nonzero indices, but the currently reduced column is represented a data-structure which allows for quick updates and maximum queries. The specialized bit-tree data structure described in~\cite{phat} is a good choice. Such data structures  can be costly to initialized, but this is done only once, in line \ref{line:bt1}. It is subsequently used in lines \ref{line:bt2}, \ref{line:bt3}, \ref{line:bt4} at which point it is efficiently cleared.

\begin{algorithm}
\caption{Optimized Boundary Matrix Reduction}
\label{alg:opt}
\begin{algorithmic}[1]
\Require Boundary matrix $M$ of a simplicial complex filtration of dimension $d$ with $n$ columns
\Ensure Reduced boundary matrix $M$
\State $L \gets [0, \dots, 0]$ of size $n$
\State $C \gets$ data-structure for a column of size $n$ \label{line:bt1}
\For {$\delta = d, \dots, 1$}
    \For {$j = 1, \dots, n$}
        \If {$j$ corresponds to simplex of dim $\neq \delta$}
            \State continue
        \EndIf        
        \State copy $M_j$ to $C$ \label{line:bt2}
        \While {$C \neq 0$ and $L[\text{low}(C)] \neq 0$}
            \State  add $M_{L[\text{low}(C)]}$ to $C$ \label{line:bt3} 
        \EndWhile
        \If {$C \neq 0$}
            \State $L[\text{low}(C)] \gets j$ 
            \State $M_{\text{low}(C)} \gets 0$ \label{line:clearing}
        \EndIf
        \State move $C$ to $M_{j}$ \label{line:bt4}    
    \EndFor
\EndFor
\end{algorithmic}
\end{algorithm}

\myheader{Coboundary matrix reduction.}
Persistent homology can be also obtained form a reduced coboundary matrix~\cite{deSilva_2011}. More concretely, we consider the anti-transpose of a boundary matrix obtained by mapping each $0$-based index $i$ to a dual index $i^* = n-1-i$, where $n$ is the total number of simplices. This dual index notation makes it easier to follow computational examples. We remark that in practice it is typically more convenient and faster to generate a coboundary matrix directly rather than by anti-transposing a previously generated boundary matrix.

\begin{property}[Pairing]
\label{prop:copairing}
If $j^*$ is the lowest one of a column $i^*$ in a reduced coboundary matrix, we obtain an index persistence pair $(i^*, j^*)$~\cite{phat, deSilva_2011}. 
\end{property}
We note this is reversed compared to the boundary matrix case: in this case $j$ being the lowest one of $i$ yields $(j, i)$.

The killing technique can be adapted to the case of coboundary matrix as follows~\cite{phat}. The lowest-one $j^*$ of a nonempty column $i^*$ in the coboundary matrix informs us that column $j^*$ of the coboundary matrix can be killed. In this case, however, simplex $\sigma_{j^*}$ is of \emph{higher} dimension, which means we should proceed in an \emph{increasing} order of simplex dimensions. 

\section{Double twist strategy}
\label{sec:doubletwist}
In this section we first explain certain computational issues arising in practice. We then propose a high-level algorithmic strategy which allows to efficiently compute representatives of persistent homology generators using existing software implementations. To this end, we introduce the counterpart of the \emph{killing} technique that we call the \emph{saving} technique and a two-pass algorithm that we call a \emph{double twist} algorithm.

Simplicial filtrations often arise from Vietoris--Rips or {\v C}ech complexes built from a point cloud with $n$ points in $\mathbb{R}^D$. The number of $k$-simplices can be as large as ${n \choose {k+1}}$. Due to this, we are often restricted to the $d$-dimensional skeleton, namely the simplices with dimension not exceeding $d < D$. In practice, $d$ is $2$, $3$, or another small constant. In such cases there are $\Theta(n^{d+1})$ top-dimensional simplices,  and these simplices are the most numerous.

The good news is that persistent homology up to degree $d-1$ can be be computed from such a $d$ dimensional skeleton. There is however a subtle computational downside to restricting the complex in this way. In short, the killing technique cannot be used to zero out the columns corresponding to the top-dimensional simplices (since there are no higher dimensional simplices to do the work). In practice, the bulk of work is spent reducing these columns, which makes the killing technique ineffective and often results in prohibitively slow performance~\cite{phat}. This is not only because these columns are the most numerous -- they are also often harder to reduce, since most of them have to be reduced to zero.

Switching to coboundary reduction largely alleviates this problem, as elaborated in~\cite{deSilva_2011, phat, bauer2021ripser}. Briefly, the coboundary of each top-dimensional simplex in the input skeleton is empty, so no work is needed. Additionally, the killing technique helps zero-out the columns of penultimate dimension.

However, unlike the columns of the reduced boundary matrix (using column operations), the columns of the reduced coboundary matrix (using column operations) do not contain cycle representatives of homology groups. Instead they capture information about cocycle representatives of cohomology groups. We therefore propose a strategy which allows us to obtain the desired homological information while exploiting the improvement granted by coboundary matrix reduction.

We stress that it is possible to obtain representatives of homological generators directly from the coboundary matrix reduction. This however requires tracking the change of basis matrix~\cite{deSilva_2011}, which may incur a significant additional cost. Many software implementations, including the PHAT library, avoid this extra cost; the Eirene software by Henselman-Petrusek is a notable exception. One advantage of our technique is that it allows us to use existing, optimized matrix reduction software without any modification. This is not only easier for the user, but also ensures that performance is not inadvertently degraded due to modifications.

\myheader{Saving technique.}
The new trick is to use the information contained in the reduced coboundary matrix to prune the boundary matrix. Reducing this pruned boundary matrix yields the desired representatives of homological generators. This technique is used in Algorithm~\ref{alg:doubletwist} which we call the \emph{double twist} algorithm. Despite involving two passes, experiments in the next section show that this approach is much faster than reducing the original boundary matrix.

More precisely, if a column $j^*$ is the lowest one of a column in the reduced coboundary matrix, we \emph{save} the corresponding simplex $j = n-1-j^*$. This is done in lines \ref{line:from_save}-\ref{line:to_save} of Algorithm~\ref{alg:doubletwist}. We then construct a boundary matrix -- but only the columns corresponding to the previously saved simplices are generated. Dually, all the remaining columns are set to 0 in the constructed boundary matrix. We emphasize that the boundary matrix is not obtained by anti-transposing the reduced coboundary matrix and zeroing out selected columns, which would lose useful information about cycle representatives. See Figure~\ref{fig:doubletwist} for a computational example using the same input as Figure~\ref{fig:boundaryReduc}. Next, we prove that the proposed algorithm yields correct results.

\begin{prop}[Saving Works]
\label{lem:saving}
The output of the double twist algorithm applied to a filtration $F$ coincides with the reduced matrix $B'$ output by the twist algorithm applied to the boundary matrix $B$ of $F$.
\end{prop}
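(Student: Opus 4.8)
The plan is to compare two runs of the twist-style reduction on the boundary side: the ``reference'' run, which reduces the full boundary matrix $B$ in decreasing dimension order (Algorithm~\ref{alg:opt}), and the ``double-twist'' run, which reduces the pruned boundary matrix in which only the saved columns are present. I would first characterize, purely in terms of the index persistence pairs, which columns the reference run touches nontrivially. By Property~\ref{prop:zero_columns} and Property~\ref{prop:nonzero_columns}, a column of $B$ reduces to a nonzero column exactly when it is the higher index of an index persistence pair, i.e. when $j$ is negative; positive simplices give zero columns. The twist/killing optimization means the reference run explicitly zeroes out every positive column that is the lowest one of some already-reduced higher-dimensional column — i.e. every positive $j$ that is paired. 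So the only columns that survive the reference run as genuinely nonzero are the negative columns, and the only columns whose reduction does ``real work'' are those negative columns (plus the unpaired positive columns, which stay zero anyway).

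Second, I would invoke the duality results of de~Silva--Morozov--Vejdemo-Johansson together with Property~\ref{prop:copairing}: the set of index persistence pairs obtained from the reduced coboundary matrix equals the set obtained from the reduced boundary matrix, merely with the roles of ``lowest one'' and ``column index'' swapped. Hence the simplices saved in lines~\ref{line:from_save}--\ref{line:to_save} of Algorithm~\ref{alg:doubletwist} — those that appear as a lowest one $j^*$ in the reduced coboundary matrix, translated back via $j = n-1-j^*$ — are precisely the negative simplices of $F$, i.e. exactly the columns that carry the non-essential homology representatives. Therefore the pruned boundary matrix used in the second pass agrees with $B$ on every negative column and is zero elsewhere, whereas $B$ itself is zero on the unpaired positive columns and, under the killing optimization, is also zeroed on the paired positive columns — so the two matrices agree wherever $B$ (after clearing) is nonzero, and the pruned matrix is zero on exactly the columns $B'$ would have killed or had empty anyway.

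Third, I would argue that running the reduction on the pruned matrix produces the same reduced columns as running it on $B$. The key point is that left-to-right column additions during reduction of $B$ only ever add columns $M_i$ with $L[\mathrm{low}(M_i)]$ set, i.e. columns that are themselves nonzero at that moment; by the previous step these are all negative columns, which are exactly the columns retained in the pruned matrix. Since a zero column never changes any lowest-one lookup table entry and is never used as a pivot, deleting the positive columns (setting them to $0$) cannot alter the sequence of additions performed on any retained column, nor the final lowest ones. One subtlety is the dimension-by-dimension processing order: I must check that the saved columns still get reduced in the same order relative to one another, which holds because both algorithms iterate dimensions in the same direction and, within a dimension, in the same filtration order — and the pivots $L[\cdot]$ are populated identically because only negative columns ever write to $L$. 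An induction on the processing order then shows column-by-column equality of the two reduced matrices.

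The main obstacle I anticipate is making the second and third steps fully rigorous simultaneously: I need the coboundary pass to correctly identify \emph{all and only} the negative simplices, and I need this to hold even though the coboundary twist proceeds in \emph{increasing} dimension while the boundary twist proceeds in \emph{decreasing} dimension. Pinning down that the saved set is exactly the set of negative simplices — neither missing any, because the coboundary killing optimization never suppresses a lowest one that corresponds to a true pair, nor including spurious ones — is where the duality lemma of~\cite{deSilva_2011} must be quoted carefully, and is the crux of the argument. Once the saved set is identified with the negative simplices, the equality of the two boundary reductions is a routine ``zero columns are inert'' induction.
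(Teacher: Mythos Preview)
Your proposal is correct and follows essentially the same two-step argument as the paper: identify the saved simplices with the negative simplices via Property~\ref{prop:copairing} and the duality of~\cite{deSilva_2011}, then argue that positive (eventually zero) columns never serve as pivots and hence their absence leaves the reduction of the remaining columns unchanged. One minor slip: unpaired positive columns of the unreduced $B$ are generally \emph{not} zero before reduction, but your third paragraph does not actually rely on that misstatement, so the argument stands.
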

\begin{proof}
First, Property~\ref{prop:copairing} implies that each saved simplex corresponds to a negative simplex. The remaining ones are therefore positive simplices. Property~\ref{prop:zero_columns} implies that these columns of $B$ would reduce to zero, so the zero columns are returned by both algorithms. Second, the twist algorithm only adds fully reduced columns to other columns on their right. This implies that the columns corresponding to positive simplices do not affect any of the remaining columns of the reduced matrix. Therefore, the nonzero entries of $B$ are reduced in the same way by both algorithms and the outputs coincide.
\end{proof}
Along with Property~\ref{prop:nonzero_columns}, the above implies that $B'$ contains representatives of non-essential persistent homology classes of $F$ as columns.

\myheader{Technicalities.}
We remark that only a single boundary (or coboundary) matrix is stored at a time. After the first pass, the reduced coboundary matrix can be removed from memory, and we only need to store the information about the saved simplices, which is of negligible size. 

\begin{table*}[t!]{
\caption{Results of computational experiments. Each rows corresponds to different datasets. In particular, the two rightmost columns show the timings of the two matrix reductions in our double twist approach.}
\label{tab:results}
\resizebox{0.95\textwidth}{!}
{
\begin{tabular}{|r|c|r|r|r|r||r|r|}
\hline
\thead{Data\\size} & \thead{Maximum\\distance} & \thead{Number of\\simplices} & \thead{Nonzero entries \\ in input (co)boundary matrix} & \thead{Nonzero entries after\\1st pass} & \thead{Boundary matrix \\ reduction  time \\ (naive approach)} & \thead{Coboundary\\reduction time \\ (1st pass of proposed alg.)} & \thead{Pruned boundary\\reduction time \\ (2nd pass of proposed alg.)}\\ \hline
800 & 1.5 & 24,134,214 & 72,195,145 & 614,492 & 443.613s & 2.445s & 0.058s \\ \hline
800 & 2 & 85,334,000 & 255,680,000 & 958,001 & 2,292.590s & 9.361s & 0.157s \\ \hline
1,600 & 1.3 & 34,482,186 & 103,026,427 & 1, 244,394 & 950.334s & 4.963s & 0.125s \\ \hline
1,600 & 1.5 & 193,843,549 & 580,703,170 & 2,466,432 & 8,160.460s & 23.611s & 0.414s \\ \hline
3,200 & 1 & 5,767,306 & 16,991,750 & 898,505 & 126.506s & 1.391s & 0.043s \\ \hline
3,200 & 1.2 & 89,682,378 & 268,002,505 & 3,101,888 & 4,753.800s & 14.811s & 0.279s \\ \hline
\hline
6,400 & 1 & 44,724,301 & 132,953,878 & 3,593,076 & 2,665.040s & 13.482s & 0.234s \\ \hline
12,800 & 1 & 354,104,851 & 1,057,473,077 & 14,396,429 & $>$7,200s & 205.768s & 2.030s \\ \hline
\end{tabular}
}
}
\end{table*}

\begin{algorithm}
\caption{Double twist strategy}
\label{alg:doubletwist}
\begin{algorithmic}[1]
\Require Filtration $F$ of a simplicial complex
\Ensure Reduced boundary matrix $B$ containing cycle representatives as columns
\State saved-simplices $\gets$ [False$,\dots,$False] of size $n$
\State $M \gets $ coboundary matrix of $F$
\State reduce $M$ (Alg.~\ref{alg:opt} but with reversed dimensions)
\For{$i=1,\dots n$} \label{line:from_save}
    \If{$M_i\neq 0$}
        \State $j \gets low(M_i)$
        \State saved-simplices[$n-1-j$] $\gets$ True
    \EndIf
\EndFor \label{line:to_save}
\State delete $M$ from memory
\State $B \gets$ empty boundary matrix with $n$ columns
\For{$i=1,\dots, n$} 
    \If{$\text{saved-simplices}[i]=$True}
        \State $B[i] \gets$  boundary of simplex $i$        
    \EndIf    
\EndFor
\State reduce $B$
\end{algorithmic}
\end{algorithm}

\begin{figure}
    \centering
    \includegraphics[width=.3\textwidth]{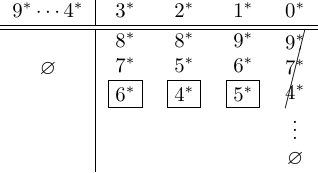}
    \includegraphics[width=.3\textwidth]{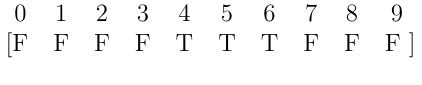}\hfill
    \includegraphics[width=.3\textwidth]{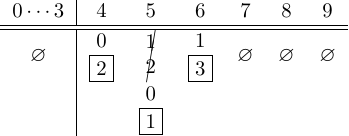}    
    \caption{Double twist applied to the same filtration as in Figure~\ref{fig:boundaryReduc}. After the first pass (top), simplices $4,5,6$ are \emph{saved} as indicated by the array (middle) representing the saved-simplices variable. After the second pass (bottom) we obtain a matrix identical to the reduced matrix in Figure~\ref{fig:boundaryReduc}.}
    \label{fig:doubletwist}
\end{figure}

\section{Experiments}
\label{sec:experiments}

In our experiments we focus on Vietoris--Rips filtrations coming from synthetic data.
The aim is to check how the two matrix reductions in the double-twist algorithm scale, compared to a naive strategy in which the boundary matrix is reduced directly. We note that we use a fully optimized twist reduction also for the naive strategy. We also aim to verify that the boundary matrix used in the second pass of our algorithm contains only a small number of nonzero elements.

We implemented our strategy in C++ using the PHAT library. We used a {Clang 14.0.3} compiler. The experiments were done on a single core of a 3.5 GHz CPU with 32 GB RAM.

The results are presented in Table~\ref{tab:results}. Each row corresponds to a single dataset. Each dataset is samples of a 9-dimensional sphere in $\mathbb{R}^{10}$ with a different number of points and radius cutoff. We benchmark both strategies on the 2-skeleton of the Vietoris--Rips filtration for each dataset.

\myheader{Observations.} Analyzing Table~\ref{tab:results}, we observe that the second pass of the double twist algorithm has negligible impact on the overall execution time (less than 10\%). This is not surprising, given the number of nonzero elements in pruned boundary matrix is at least an order of magnitude smaller than compared to the original (co)boundary matrix. Overall, the proposed two-pass algorithm is much faster (up to 200 times) than the naive approach. 


The experiments clearly show that there are situations in which using the new double twist strategy is beneficial compared to direct reduction of the boundary matrix. Finally, we add that the results apply also to other situations in which the top-dimensional cells dominate, for example low-dimensional skeleta of high-dimensional \v{C}ech, Delaunay or even cubical filtrations.

\section{Summary}
\label{sec:summary}
We proposed a simple algorithmic strategy and showed that -- in certain important situations -- it significantly speeds up computation of representatives of persistent homology generators. We stress that it does not require implementing a new matrix-reduction algorithm. Instead, any optimized implementation can be used in a black-box fashion, provided it computes index persistence pairs.

We also reiterate that only the representatives of non-essential classes can be obtained from the reduced boundary matrix. While this is actually the more interesting information captured by the persistent homology, we hope that future software packages will support efficient implementation of all representatives. In the meanwhile, however, techniques like the one we proposed serve as a useful workaround.

\bibliographystyle{plainurl}
\bibliography{main}

\end{document}